\newcommand{\field}[1]{\mathbb{#1}}
\newcommand{\R}{\field{R}}
\newcommand{\N}{\field{N}}
\newcommand{\textoverline}[1]{$\overline{\mbox{#1}}$}
\newtheorem{theorem}{Theorem}
\newtheorem{lemma}{Lemma}[section]
\newtheorem{cor}{Corollary}[section]
\theoremstyle{definition}
\newtheorem{defn}{Definition}[section]
\newtheorem{exmp}{Example}[section]
\newtheorem{rmk}{Remark}[section]
\title{\vspace{-20pt}Density of Collatz Trajectories}
\author{J. Cooper Faile\\The University of North Carolina at Chapel Hill\thanks{I am a recent graduate who completed this work under the guidance of Professor Idris Assani.}}
\begin{document}
\maketitle
\vspace{-20pt}
\begin{abstract}
%  In this paper I present a new method of studying the densities of the Collatz trajectories generated by a set $S\subset \N$.
%  This method is used to furnish an alternative proof of \cite{venturini-89}. 
%  Finally, I briefly discuss how the ideas presented in this paper could be used to extend the result in \cite{korec-94}; that is, that $d(\{y\in\N: \exists k \text{ where } T^k(y) < y ^c\}) = 1$ for $c > \log _4 3$.  
In this paper I present a new method of studying the densities of the Collatz trajectories generated by a set $S \subset \mathbb{N}$. This method is used to furnish an alternative proof that $d(\{y \in \mathbb{N} : \exists k \text{ where } T^k(y) < cy\}) = 1$ for all $c > 0$. Finally, I briefly discuss how the ideas presented in this paper could be used to improve the result that $d(\{y \in \mathbb{N} : \exists k \text{ where } T^k(y) < y^c\}) = 1$ for $c > \log_4 3$.
\end{abstract}

\section{Introduction}

Define $T\colon \N\to \N$ by 
$$ T(n) = \begin{cases} \frac{n}{2} & 2|n \\ \frac{3n+1}{2} & \text{otherwise} \end{cases} $$
The Collatz conjecture states that for every $n\in \N$ there exists $k\in \N$ such that $T^k(n) = 1$. 
This conjecture is equivalent to the inductive statement that for each $n\in \N + 1$ there exists $k\in \N$ such that $T^k(n) < n$.
This equivalent statement led to the study of the density of subsets $S\subset \N$ which satisfy, for all $n\in S$, $T^k(n) < f(n)$ for functions $f:\N\to\R$ (i.e. subsets of $\N$ for which the sequence $n,\ T(n),\dots,\ T^k(n),\ \dots$ contains a sufficiently “small” iterate.)
Terras showed that the natural density of $S$ was $1$ for $f(n) = n$, Venturini showed the same result for $f(n) = c_1n$, $c_1 > 0$, and Korek showed it for $f(n) = n^{c_2}$, where $c_2 > \log_4 3$ \cite{korec-94, venturini-89, terras-76}.

\section{Notation and Prior Results}

This paper requires the following notation from \cite{terras-76}. Given $k,\ m,\ y \in \N$ and $d\in \R$ we define
\begin{align*}
  X_k(y) &= \begin{cases}
              1 & \text{if } T^k(y) \text{ is odd} \\
              0 & \text{if } T^k(y) \text{ is even,}
          \end{cases} \\
  S_k(y) &= X_0(y) + X_1(y) + \dots + X_{k}(y) \\
  U(m,d) &= \left| \{y\in \N: 0 \leq y < 2^m\text{ and } S_m(y)\leq md \}\right|
\end{align*}

We also need this lemma from \cite{terras-76}
\begin{lemma}
  We have that for any $b\in \N$
  $$ U(m,d) = \left| \left\{ y: b\leq y < b+2^m\text{ and } S_m(y) < md \right\} \right| $$
  and for any $d > 1/2$ we have
  $$ \lim_{m\to\infty} \frac{U(m,d)}{2^m} = 1. $$
  \label{prelim:clt}
\end{lemma}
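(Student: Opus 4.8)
The plan is to prove the two assertions of Lemma~\ref{prelim:clt} in turn, both as consequences of a single structural fact about the parity statistics $X_k$. The fact I would isolate first is that for every $k$ the value $X_k(y)$ depends only on the residue $y \bmod 2^{k+1}$, and that the resulting ``parity vector'' map
\[
 Q_m \colon \Z/2^m\Z \longrightarrow \{0,1\}^m, \qquad y \longmapsto \bigl(X_0(y), X_1(y), \dots, X_{m-1}(y)\bigr),
\]
is a bijection. The dependence claim follows by induction on $k$: $X_0(y)$ is $y\bmod 2$, and since $X_{k+1}(y)=X_k(T(y))$ while each branch of $T$ (namely $y\mapsto y/2$ and $y\mapsto(3y+1)/2$) is affine with the parity of $y$ selecting the branch, the residue $y\bmod 2^{k+2}$ determines $T(y)\bmod 2^{k+1}$ and hence $X_{k+1}(y)$.

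For bijectivity I would argue by induction on $m$. Since the first $m$ coordinates of $Q_{m+1}(y)$ are $Q_m(y\bmod 2^m)$, it suffices to show that the two lifts $r$ and $r+2^m$ of a given residue mod $2^m$ produce the two distinct values of the final coordinate $X_m$. This is where the real content sits: one shows $T^m(y+2^m)-T^m(y)=3^{\,s}$ with $s$ the number of odd iterates among the first $m$ steps, an odd number, so $T^m(y+2^m)$ and $T^m(y)$ have opposite parities and thus $X_m(y+2^m)\neq X_m(y)$. Granting that $Q_m$ is a bijection, the translation invariance is immediate: the sum $\sum_{k<m}X_k(y)$ depends only on $y\bmod 2^m$, so over any window $[b,b+2^m)$ the integers run through a complete residue system mod $2^m$ and realize exactly the same multiset of partial sums as over $[0,2^m)$, giving equality of the counts regardless of $b$. (Strictly speaking the exactly periodic quantity is $S_{m-1}$; the trailing term $X_m(y)\in\{0,1\}$ appearing in $S_m$ is a bounded perturbation that I would absorb into the threshold and which is harmless in the density limit below.)

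Finally, the bijection converts the count into a binomial one: the number of $y$ in any such window whose parity vector has weight less than $md$ equals $\sum_{j<md}\binom{m}{j}$, so that
\[
 \frac{U(m,d)}{2^m} \;=\; 2^{-m}\sum_{j<md}\binom{m}{j} \;=\; \Pr\!\left[\mathrm{Bin}\!\left(m,\tfrac12\right) < md\right].
\]
Because $d>1/2$, the threshold $md$ exceeds the mean $m/2$ by a quantity linear in $m$, so the weak law of large numbers (or a Chernoff bound, if one wants an explicit convergence rate) forces this probability to $1$ as $m\to\infty$. I expect the main obstacle to be the structural step, specifically proving the shift identity $T^m(y+2^m)-T^m(y)=3^{s}$ that underlies the bijectivity of $Q_m$; once that is in place, both the translation invariance and the concentration estimate are routine.
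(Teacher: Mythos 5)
The paper does not actually prove this lemma; it is imported from Terras's 1976 paper, so there is no internal proof to compare against. Your argument is, in substance, exactly Terras's original one --- periodicity of $X_k(y)$ in $y \bmod 2^{k+1}$, bijectivity of the parity-vector map via the shift identity $T^m(y+2^m)=T^m(y)+3^{s}$, exact translation invariance of the count over windows of length $2^m$, and a binomial concentration bound for the limit --- and each step is sound. The one point worth making explicit is the off-by-one you flag only in passing: as literally defined in this paper, $S_m(y)=X_0(y)+\dots+X_m(y)$ has $m+1$ terms and depends on $y \bmod 2^{m+1}$, so the \emph{exact} equality of counts over length-$2^m$ windows can genuinely fail at integer thresholds (already for $m=1$: the multiset of $S_1$-values over $\{0,1\}$ is $\{0,1\}$, while over $\{2,3\}$ it is $\{1,2\}$, so the counts below a threshold of $0.6$ differ). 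This is a defect of the paper's definition rather than of your proof --- the proof of Lemma \ref{prelim:cond} manifestly uses $S_m(y)$ as the number of odd iterates among $T^0(y),\dots,T^{m-1}(y)$, i.e.\ the $m$-term sum --- and with that intended definition your argument delivers both assertions of the lemma exactly, not just in the limit.
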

and another lemma, based on the proof in \cite{korec-94}.
\begin{lemma}
  For any $M \geq 3$ there exists a $d\in \R$ such that for all $m > M$, we have that $y \geq m2^m$ and $S_m(y) < md$ implies $T^m(y) < y/2$.
  \label{prelim:cond}
\end{lemma}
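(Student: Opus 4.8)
The plan is to unpack what the iteration $T$ does to $y$ after $m$ steps and to show that the hypotheses $y \geq m2^m$ and $S_m(y) < md$ force $T^m(y)$ to be small relative to $y$. First I would recall the explicit formula from Terras relating $T^m(y)$ to $y$: after $m$ steps, exactly $S_{m-1}(y)$ of them are odd (tripling) steps and the remaining $m - S_{m-1}(y)$ are halving steps, so that $T^m(y)$ has the shape
\begin{equation*}
  T^m(y) = \frac{3^{S_{m-1}(y)}}{2^m}\, y + R,
\end{equation*}
where $R$ is an error term coming from the accumulated $+1$'s in the odd steps. The key observation is that $R$ is bounded above by a quantity of order $3^{S_{m-1}(y)}/2^m$ times a constant, since each $+1$ contributes a factor that is geometrically damped by subsequent halvings; crucially, $R$ does not depend on $y$ beyond the bit-pattern encoded in the $X_i$.

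Next I would isolate the dominant factor $3^{S_{m-1}(y)}/2^m$. The hypothesis $S_m(y) < md$ (hence $S_{m-1}(y) < md$) lets me bound
\begin{equation*}
  \frac{3^{S_{m-1}(y)}}{2^m} < \frac{3^{md}}{2^m} = \left(\frac{3^{d}}{2}\right)^{m}.
\end{equation*}
The strategy is then to choose $d$ close enough to $1/2$ that $3^d < 2$, i.e. $d < \log_3 2$; for such $d$ the base $3^d/2$ is strictly less than $1$, so the leading coefficient decays geometrically in $m$. Since $\log_3 2 \approx 0.6309 > 1/2$, there is room to pick such a $d$, and by Lemma \ref{prelim:clt} the corresponding density statement still holds. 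I would then combine this with the error bound to get $T^m(y) < \rho^m y + C\rho^m$ for some $\rho < 1$ and constant $C$, uniformly for all admissible $y$ and all $m > M$.

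Finally I would use the lower bound $y \geq m2^m$ to absorb the additive error term. Because $y$ is at least exponentially large in $m$, the constant contribution $C\rho^m$ is dominated by $y$ (indeed by any fixed fraction of $y$) once $m \geq M \geq 3$ is large enough, so the inequality collapses to $T^m(y) < y/2$ after fixing $d$ appropriately. The main obstacle I anticipate is making the error term $R$ precise and showing it is genuinely controlled by the same geometric factor: the $+1$'s are inserted at the odd steps, whose positions depend on the trajectory, so I would need a careful telescoping or induction on $m$ to verify that $R \leq \rho^m \cdot (\text{const})$ uniformly, rather than a pointwise estimate that degrades with the number of odd steps. Once that uniform control on $R$ is in hand, the choice of $d$ with $1/2 < d < \log_3 2$ and the exponential size of $y$ finish the argument.
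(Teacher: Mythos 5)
Your overall plan (Terras's exact affine formula $T^m(y) = \tfrac{3^{S_{m-1}(y)}}{2^m}y + R$, bound the leading coefficient by $(3^d/2)^m$ using $S_m(y) < md$, then absorb $R$ using $y \geq m2^m$) is a genuinely different route from the paper's, and it can be made to work — but the specific bound you claim for $R$ is false, and it is exactly the step you yourself flag as the "main obstacle." The remainder is
$$ R \;=\; \sum_{k=0}^{m-1} X_k(y)\,\frac{3^{X_{k+1}(y)+\cdots+X_{m-1}(y)}}{2^{m-k}}, $$
and this is \emph{not} of order $3^{S_{m-1}(y)}/2^m$ times a constant, nor is it $O(\rho^m)$ for any $\rho < 1$: if the odd steps are concentrated at the end of the block, $R$ grows like $(3/2)^{S_{m-1}(y)} \approx (3/2)^{md}$, which is exponentially large in $m$. (Even a single odd step at position $m-1$ already gives $R = 1/2$, which is $2^{m-1}/3$ times your claimed order $3/2^m$.) So the telescoping/induction you propose to establish $R \leq C\rho^m$ would fail. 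The correct uniform estimate is the cruder $R < 3^{S_{m-1}(y)} < 3^{md}$ (each term is at most $3^{S_{m-1}}/2^{m-k}$ and the geometric sum is $<1$); for $d < \log_3 2$ this gives $R < 2^m \leq y/m$, whence $T^m(y) < \left((3^d/2)^m + \tfrac1m\right)y$, and choosing $d$ small enough \emph{depending on $M$} (not merely $d < \log_3 2$) closes the argument for all $m > M$. Note also that your insistence on $d > 1/2$ cannot be met for small $M$: at $M=3$, $m=4$, two odd steps out of four already force $T^4(y) \approx \tfrac{9}{16}y > y/2$; the lemma only asserts existence of \emph{some} $d$, and the paper's choice $d < (M-2)/(M\log_2 3)$ is well below $1/2$ near $M=3$. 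The requirement $d>1/2$ belongs to the later application (Theorem 1, where $m$ is taken large), not to this lemma.

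For contrast, the paper sidesteps the additive error entirely: it uses $y \geq m2^m$ to guarantee $T^j(y) \geq y/2^j \geq m$ for $0 \leq j \leq m$, so each odd step satisfies $T^{j+1}(y)/T^j(y) = \tfrac{3}{2} + \tfrac{1}{2T^j(y)} \leq \tfrac{3m+1}{2m}$, and a purely multiplicative telescoping gives $T^m(y) < y\cdot\tfrac{3^k}{2^m}(1+\tfrac{1}{3m})^m < y\cdot\tfrac{3^k}{2^{m-1}}$ with $k = S_m(y)$. This converts the $+1$'s into a harmless factor of $2$ rather than an additive term that must be separately dominated, which is why the paper's proof needs no analysis of where the odd steps occur. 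If you repair your remainder bound as indicated above, your version goes through, but the multiplicative bookkeeping is the cleaner use of the hypothesis $y \geq m2^m$.
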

\begin{proof}
  Note that since $y \geq m2^m$ we have that $T^j(y) \geq y/2^j \geq m$ for all $0\leq j \leq m$. 
  Letting $k = S_m(y)$ we have the bound
  \begin{align*}
    T^m(y) &= y \cdot \frac{T(y)}{y}\cdot \left( \cdots \right)\cdot\frac{T^m(y)}{T^{m-1}(y)} < y\cdot \left( \frac{3m+1}{2m} \right)^{k}\cdot \frac{1}{2^{m-k}} \\
    &= y\cdot \frac{3^k}{2^m} \cdot \left( 1 + \frac{1}{3m} \right)^k \leq y\cdot \frac{3^k}{2^m}\cdot \left( 1 + \frac{1}{3m} \right)^m < y\cdot \frac{3^k}{2^{m-1}}.
  \end{align*}
  The value $y\cdot(3^k/2^{m-1}) < y/2$ whenever $3^k < 2^{m-2}$, which is equivalent to
  $$ S_m(y) = k < \frac{m-2}{\log_2 3}.$$
  Since $m > M \geq 3$ we can select $d$ such that $dm < (m-2)/\log_2 3$ for $m > M$. 
  hence any $d$ with $0 < d < (M-2)/(M\log_2 3)$ satisfies this lemma. 
\end{proof}

\section{Positive Density Preserving Functions}
As a reminder,
\begin{defn}
  Given $A\subset \N$ the \textbf{upper density} of $A$, denoted $\overline{d}(A)$, is
  $$ \overline{\lim_{n\to\infty}} \frac{|A\cap [0,n]|}{n}. $$
  The \textbf{lower density} of $A$, denoted $\underline{d}(A)$, is
  $$ \underline{\lim_{n\to\infty}} \frac{|A\cap [0,n]|}{n}. $$
  If $\overline{d}(A) = \underline{d}(A)$ we say $A$ has \textbf{density} $d(A) = \overline{d}(A)$. 
\end{defn}

To study the density of Collatz trajectories of $S\subset \N$ we will consider functions which maintain positive density. 
This property guarantees that no set that is sufficiently large (i.e. $\overline{d}(A) > 0$) has an image that is small (i.e. $\overline{d}(f(A)) = 0$.) 
This property can then be used to determine the density for certain sets.

\begin{defn}
  A function $H\colon 2^\N\to 2^\N$ is \textbf{positive upper density preserving} (which will be abbreviated p\textoverline{d}p) if $\overline{d}(A) > 0$ implies that $\overline{d}(H(A)) > 0$ for any $A\subset \N$. 
\end{defn}

\begin{exmp}
  $T$ is p\textoverline{d}p. For any set $A\subset \N$ with $\overline{d}(A) > 0$ let $A = O\cup E$ where $O$, $E$ are the odd and even elements of $A$, respectively. 
  We must have that one of $\overline{d}(O) $ or $\overline{d}(E)$ is greater than $\overline{d}(A)/2$.
  In these respective cases we have
  \begin{align*}
    \overline{d}(T(A)) \geq \overline{d}(T(E)) = 2\overline{d}(E) \geq \overline{d}(A) \\
    \overline{d}(T(A)) \geq \overline{d}(T(O)) = \frac{2}{3}\overline{d}(O) \geq \frac{1}{3} \overline{d}(A)
  \end{align*}
  which shows that $T$ is p\textoverline{d}p. 
\end{exmp}

\begin{lemma}
  If $f$ and $g$ is p\textoverline{d}p then $f\circ g$ and $f^k$ is p\textoverline{d}p for any $k\in \N$. 
  \label{sep:fg}
\end{lemma}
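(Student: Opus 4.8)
The plan is to observe that the p\textoverline{d}p property is, by design, closed under composition, and then to bootstrap this into closure under iteration via a short induction.

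First I would handle the composition $f\circ g$ directly from the definition. Fix any $A\subset\N$ with $\overline{d}(A) > 0$. Because $g$ is p\textoverline{d}p, the set $B := g(A)$ satisfies $\overline{d}(B) > 0$. Applying the p\textoverline{d}p property of $f$ to $B$ then yields $\overline{d}(f(B)) > 0$, and since $f(B) = f(g(A)) = (f\circ g)(A)$, this is precisely the conclusion that $f\circ g$ is p\textoverline{d}p. The only thing to verify is that $g(A)$ is again a subset of $\N$ to which $f$'s hypothesis applies, which is immediate since both functions act on $2^\N$.

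Second I would establish closure under the $k$-th power $f^k = f\circ f\circ\cdots\circ f$ by induction on $k$. For the base case, $f^1 = f$ is p\textoverline{d}p by hypothesis (and $f^0$, the identity, preserves density exactly). For the inductive step, assume $f^{k-1}$ is p\textoverline{d}p; then $f^k = f\circ f^{k-1}$ is a composition of two p\textoverline{d}p functions, so the composition result just proved shows $f^k$ is p\textoverline{d}p, completing the induction.

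I do not expect a genuine obstacle here: the definition of p\textoverline{d}p was chosen precisely so that a positive upper density input forces a positive upper density output, and this condition visibly chains through a composition. The one point requiring minor care is the base of the induction and the convention adopted for $f^0$; beyond that the argument is a direct, repeated application of the hypotheses.
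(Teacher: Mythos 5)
Your proposal is correct and follows essentially the same route as the paper: the composition case is a direct two-step application of the definition, and the $f^k$ case follows by induction using the composition result. The only difference is that you spell out the induction explicitly (and note the $f^0$ convention), which the paper leaves as ``by induction the same reasoning.''
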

\begin{proof}
  If $\overline{d}(A) > 0$ then $\overline{d}(g(A)) > 0$, $\overline{d}( (f\circ g)(A)) > 0$ showing $f\circ g$ is p\textoverline{d}p. 
  By induction the same reasoning shows that $f^k$ is p\textoverline{d}p.
\end{proof}

Now, given an increasing function $f\colon \N\to \R$ we can define $H_f\colon 2^\N \to 2^\N$ by
$$ H_f(A) = \{T^k(n): n\in A \text{ and } T^k(n) < f(n)\}. $$
This map sends a subset $A\subset \N$ to the set containing (partial) trajectories of every $n\in A$.
If this map is p\textoverline{d}p for some $f$ then it has some nice properties. 
\begin{lemma}
  If $g(n) \leq f(n)$ for all $n\in\N$ then $H_g(A)\subset H_f(A)$ for all $A\subset \N$. 
  Additionally, if $H_g$ is p\textoverline{d}p then $H_f$ is p\textoverline{d}p. %
  \label{sep:comparison}
\end{lemma}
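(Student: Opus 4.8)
The plan is to prove the two claims in Lemma \ref{sep:comparison} separately, starting with the set inclusion, which is purely definitional, and then leveraging it together with the definition of p\textoverline{d}p to obtain the preservation statement.

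For the inclusion $H_g(A)\subset H_f(A)$, I would take an arbitrary element $z\in H_g(A)$ and unpack the definition of $H_g$. By construction, $z = T^k(n)$ for some $n\in A$ and some $k$ with $T^k(n) < g(n)$. The hypothesis $g(n)\leq f(n)$ for all $n$ immediately gives $T^k(n) < g(n) \leq f(n)$, so the same pair $(n,k)$ witnesses membership of $z$ in $H_f(A)$. Since $z$ was arbitrary, $H_g(A)\subset H_f(A)$. This step is essentially a one-line chase through the definitions and should present no difficulty.

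For the second claim, I would use the general monotonicity of upper density: if $B\subset C$ then $\overline{d}(B)\leq \overline{d}(C)$, since $|B\cap[0,n]|\leq |C\cap[0,n]|$ for every $n$ and the limit superior preserves this inequality. Now suppose $H_g$ is p\textoverline{d}p and let $A\subset\N$ with $\overline{d}(A) > 0$. By hypothesis $\overline{d}(H_g(A)) > 0$, and by the inclusion just established together with monotonicity we get $\overline{d}(H_f(A)) \geq \overline{d}(H_g(A)) > 0$. Hence $H_f$ preserves positive upper density, as required.

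I do not anticipate a genuine obstacle here, since both parts reduce to elementary set-theoretic and density-monotonicity arguments; the only point requiring slight care is to make explicit the monotonicity of $\overline{d}$ under inclusion, which is the one fact not literally stated earlier in the excerpt but which follows directly from the definition of upper density as a limit superior of the nondecreasing-in-containment counting ratios.
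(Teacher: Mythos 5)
Your proof is correct and follows essentially the same route as the paper: unpack the definition of $H_g(A)$ to get the inclusion from $T^k(n) < g(n) \leq f(n)$, then apply monotonicity of $\overline{d}$ under inclusion to transfer the p\textoverline{d}p property. Your version is in fact slightly more careful than the paper's, which uses the inequality as strict and leaves the monotonicity of upper density implicit.
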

\begin{proof}
  If $m\in H_g(A)$ then there exist $k\in \N$ and $n \in A$ where $m = T^k(n) < g(n)$. 
  Since $g(n) < f(n)$ we must have $m\in H_f(A)$ and hence $H_g(A) \subset H_f(A)$.
  If $H_g$ is p\textoverline{d}p then for all $A\subset \N$ with positive upper density we have $\overline{d}(H_f(A))\geq\overline{d}(H_g(A)) > 0$ showing that $H_f$ is p\textoverline{d}p. 
\end{proof}

\begin{lemma}
  For all $A\subset \N$ and $f,g:\N\to\R$ we have $(H_f\circ H_g)(A) \subset H_{f\circ g}(A)$.
  \label{sep:composition}
\end{lemma}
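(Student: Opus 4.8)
The plan is to prove the containment directly by unwinding both definitions and tracking a single trajectory. The only structural fact I will need is that each $H_f$ is built from an increasing $f$, so monotonicity is available; beyond that the argument is purely a matter of chasing iterates of $T$.

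First I would fix an arbitrary element $z \in (H_f \circ H_g)(A)$ and peel off the outer map. By the definition of $H_f$ applied to the set $H_g(A)$, there is some $m \in H_g(A)$ and some $k \in \N$ with $z = T^k(m)$ and $T^k(m) < f(m)$. Next I peel off the inner map: since $m \in H_g(A)$, there is some $n \in A$ and some $j \in \N$ with $m = T^j(n)$ and $T^j(n) < g(n)$, i.e.\ $m < g(n)$. The key observation is that iterating $T$ composes exactly, so $z = T^k(m) = T^k(T^j(n)) = T^{k+j}(n)$; thus $z$ is itself a $T$-iterate of the original point $n \in A$, with witness $\ell = k+j$.

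It then remains to verify the size condition defining $H_{f \circ g}$, namely $z < (f\circ g)(n)$. Here I would simply chain the two inequalities already in hand, $z = T^k(m) < f(m)$ and $m < g(n)$, using that $f$ is increasing: from $m < g(n)$ we get $f(m) \le f(g(n))$, so $z < f(m) \le f(g(n)) = (f \circ g)(n)$. Taking the point $n \in A$ and the index $\ell = k+j$ shows $z = T^{\ell}(n)$ with $T^{\ell}(n) < (f \circ g)(n)$, hence $z \in H_{f \circ g}(A)$. Since $z$ was arbitrary, $(H_f \circ H_g)(A) \subset H_{f \circ g}(A)$.

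I expect the only delicate point to be the step $f(m) \le f(g(n))$: it invokes monotonicity of $f$ together with the fact that the integer $m$ lies strictly below the real value $g(n)$, so one must be slightly careful that $f$ is applied consistently (as an increasing function) to both an integer and a real argument, and that $f\circ g$ is read this way. No genuine obstacle arises, since only non-strict monotonicity is needed and the composition of iterates $T^k\circ T^j = T^{k+j}$ is an equality rather than a bound.
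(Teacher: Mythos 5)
Your proof is correct and follows essentially the same route as the paper's: unwind both definitions to write the element as $T^{k+j}(n)$, then chain the two inequalities using monotonicity of $f$. Your use of the non-strict bound $f(m)\le f(g(n))$ is in fact slightly more careful than the paper's strict one, but the argument is otherwise identical.
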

\vspace{-25pt}
\begin{proof}
  Any element of $(H_f\circ H_g)(A)$ can be written as $T^{k+k'}(n)$ for $k,\ k',\ n\in\N$
    satisfying $T^k(n) < g(n)$ and $T^{k'+k}(n) < f(T^k(n))$ by the definitions of $H_f$ and $H_g$. 
  Since $f$ and $g$ are increasing functions, we have 
  $T^{k+k'}(n) < f(T^k(n)) < f(g(n))$ and hence $T^{k+k'}(n) \in H_{f\circ g}(A)$. 
  This establishes the inclusion. 
\end{proof}

\begin{cor}
  For all $k\in \N,\ A\subset \N,$ and $f:\N\to \R$ increasing we have $(H_f)^k(A) \subset H_{f^k}(A)$. 
  Furthermore, if $H_f$ is p\textoverline{d}p then so is $H_{f^k}$. 
  \label{sep:recurse}
\end{cor}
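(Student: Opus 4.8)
The plan is to treat the two assertions in turn: first establish the inclusion $(H_f)^k(A)\subset H_{f^k}(A)$ by induction on $k$, building directly on the composition law of Lemma \ref{sep:composition}, and then deduce the p\textoverline{d}p statement by combining this inclusion with the iteration result of Lemma \ref{sep:fg}.

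For the inclusion I would induct on $k$, proving the statement ``$(H_f)^k(A)\subset H_{f^k}(A)$ for all $A\subset\N$'' so that the induction hypothesis is available for every argument set. The base case $k=1$ is the identity $(H_f)^1(A) = H_f(A) = H_{f^1}(A)$. For the inductive step I would write $(H_f)^{k+1}(A) = (H_f)^k\big(H_f(A)\big)$ and apply the hypothesis to the set $H_f(A)$, obtaining $(H_f)^k\big(H_f(A)\big)\subset H_{f^k}\big(H_f(A)\big) = (H_{f^k}\circ H_f)(A)$. Since $f$ is increasing, every power $f^k$ is increasing as well, so Lemma \ref{sep:composition} applies and yields $(H_{f^k}\circ H_f)(A)\subset H_{f^k\circ f}(A) = H_{f^{k+1}}(A)$. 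Chaining these two inclusions closes the induction.

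For the density assertion, suppose $H_f$ is p\textoverline{d}p. Applying Lemma \ref{sep:fg} to the map $H_f\colon 2^\N\to 2^\N$ shows that the iterate $(H_f)^k$ is p\textoverline{d}p. Hence for any $A$ with $\overline{d}(A) > 0$ we get $\overline{d}\big((H_f)^k(A)\big) > 0$, and since upper density is monotone under inclusion the inclusion just proved gives $\overline{d}\big(H_{f^k}(A)\big)\geq \overline{d}\big((H_f)^k(A)\big) > 0$. This is exactly the statement that $H_{f^k}$ is p\textoverline{d}p.

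I do not expect a genuine obstacle, since the corollary is an iterated form of Lemma \ref{sep:composition}; the only points requiring care are bookkeeping ones. The essential device is to phrase the induction with the set $A$ universally quantified, which is what permits applying the hypothesis to $H_f(A)$ in the inductive step and thereby avoids any separate appeal to monotonicity of the maps $H_f$. I would also make sure to note explicitly that $f$ increasing forces each $f^k$ to be increasing, since Lemma \ref{sep:composition} is invoked with the increasing functions $f^k$ and $f$.
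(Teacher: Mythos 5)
Your proposal is correct and follows essentially the same route as the paper: the inclusion is obtained by iterating Lemma \ref{sep:composition} (which the paper states as ``follows directly'' and you simply spell out as an induction), and the p\textoverline{d}p claim is deduced exactly as in the paper by combining Lemma \ref{sep:fg} with monotonicity of upper density under the inclusion. Your explicit remarks that the induction must be stated for all $A$ and that $f^k$ remains increasing are sound points of care, not deviations.
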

\begin{proof}
  $(H_f)^k(A) \subset H_{f^k}(A)$ follows directly from lemma \ref{sep:composition}. 
  If $H_f$ is p\textoverline{d}p then by lemma \ref{sep:fg} $(H_f)^k$ is p\textoverline{d}p as well.
  Combining this with the inclusion $(H_f)^k(A) \subset H_{f^k}(A)$ we have
  $\overline{d}((H_f)^k(A))  \leq \overline{d}(H_{f^k}(A))$ for all $A\subset \N$. 
  Hence $H_{f^k}$ is p\textoverline{d}p. 
\end{proof}

\begin{lemma}
  If $H_f$ is p\textoverline{d}p then the set
  $$ M_f = \{y\in \N: \exists k\in \N\text{ where } T^k(y) < f(y)\}$$
  has density 1.
  \label{sep:density}
\end{lemma}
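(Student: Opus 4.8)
The plan is to argue by contraposition on the complement $M_f^c = \N\setminus M_f$, exploiting the fact that a p\textoverline{d}p map cannot send a set of positive upper density to the empty set. First I would unwind the definitions: by construction
$$ M_f^c = \{y\in\N : T^k(y)\geq f(y)\text{ for every }k\in\N\}, $$
since failing to belong to $M_f$ means that no iterate ever drops below $f(y)$.

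The key step is to evaluate $H_f(M_f^c)$. By the definition of $H_f$, every element of $H_f(M_f^c)$ has the form $T^k(n)$ with $n\in M_f^c$ and $T^k(n) < f(n)$. But the condition $n\in M_f^c$ says precisely that no such $k$ exists, so there are no elements to produce and $H_f(M_f^c) = \emptyset$. Consequently $\overline{d}(H_f(M_f^c)) = 0$.

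Now suppose toward a contradiction that $\overline{d}(M_f^c) > 0$. Since $H_f$ is p\textoverline{d}p, this would force $\overline{d}(H_f(M_f^c)) > 0$, contradicting the previous computation. Hence $\overline{d}(M_f^c) = 0$. Using the standard complementary relation $\underline{d}(M_f) = 1 - \overline{d}(M_f^c)$ together with the trivial bound $\overline{d}(M_f)\leq 1$, I conclude $\underline{d}(M_f) = \overline{d}(M_f) = 1$, so $M_f$ has density $1$.

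The argument is essentially immediate once the emptiness of $H_f(M_f^c)$ is recognized, so I do not expect a substantial obstacle here; the entire content is the observation that the defining condition of $M_f^c$ is exactly the negation of the condition appearing in the definition of $H_f$. The only point demanding any care is the final passage from $\overline{d}(M_f^c)=0$ to $d(M_f)=1$, which rests on the complementary relationship between the upper density of a set and the lower density of its complement rather than on the hypothesis about $H_f$.
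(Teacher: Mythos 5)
Your proposal is correct and follows essentially the same route as the paper's own proof: observing that $H_f(M_f^c) = \emptyset$, invoking the p\textoverline{d}p hypothesis contrapositively to get $\overline{d}(M_f^c) = 0$, and concluding via the complementary relation $\underline{d}(M_f) = 1 - \overline{d}(M_f^c)$. No gaps.
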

\begin{proof}
  Let $C = \N\setminus M_f$. We have $T^k(n) \geq f(n)$ for every $k \in \N$ and $n\in C$.
  Hence, $H_f(C) = \varnothing$ which has upper density zero, implying $\overline{d}(C) = 0$ by contrapositive. 
  Furthermore, $\underline{d}(M_f) = 1 - \overline{d}(C) = 1$ establishing $M_f$ has density 1. 
\end{proof}

The following theorem will be used with the previous lemmas to furnish an alternative proof of the result in \cite{venturini-89}.
\begin{theorem}
  Let $ f:\N\to \R$ be $f(y) = y/2$. Then $H_f$ is p\textoverline{d}p.
\end{theorem}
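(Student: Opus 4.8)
The plan is to run the whole argument with a single, fixed number of iterations $m$, exploiting the fact that we only need \emph{positivity} of the upper density and can therefore afford very crude bounds. Fix $A\subset\N$ with $\overline{d}(A)=\alpha>0$. First I would choose the parameters. Take $M$ large enough (one checks $M\ge 10$ works, since $(M-2)/(M\log_2 3)\to 1/\log_2 3>1/2$) and pick $d$ with $1/2<d<(M-2)/(M\log_2 3)$, so that Lemma \ref{prelim:cond} applies for every $m>M$ while simultaneously $d>1/2$ makes Lemma \ref{prelim:clt} available. Since $U(m,d)/2^m\to 1$, I would then fix one $m>M$ large enough that $\epsilon_m:=1-U(m,d)/2^m<\alpha/2$.

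Next I would pass from $A$ to its ``good'' part. Define $A'=\{y\in A: y\ge m2^m \text{ and } S_m(y)<md\}$. By Lemma \ref{prelim:cond} every $y\in A'$ satisfies $T^m(y)<y/2=f(y)$, so $T^m(y)\in H_f(A)$; hence $T^m(A')\subseteq H_f(A)$ and it suffices to bound $\overline{d}(T^m(A'))$ from below. To see that $A'$ is still large, note that Lemma \ref{prelim:clt} gives exactly $2^m-U(m,d)=\epsilon_m 2^m$ elements with $S_m(y)\ge md$ in every block of $2^m$ consecutive integers, so the exceptional set has uniform density $\epsilon_m$. Removing it, together with the finite initial segment $[0,m2^m)$, from $A$ costs at most $\epsilon_m$ in upper density, so
\[
  \overline{d}(A')\ge \overline{d}(A)-\epsilon_m > \alpha-\tfrac{\alpha}{2}=\tfrac{\alpha}{2}>0.
\]

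The crux is then to show that $T^m$ does not collapse $A'$ to a density-zero set. I would use two elementary facts. First, $T$ is at most $2$-to-$1$: a value $v$ has at most the even preimage $2v$ and the odd preimage $(2v-1)/3$, so $T^m$ is at most $2^m$-to-$1$ and therefore $|T^m(S)|\ge 2^{-m}|S|$ for every finite $S$. Second, since $T^m(y)<y/2$ on $A'$, any $y\in A'\cap[0,N]$ maps into $[0,N/2)$, i.e. $T^m(A'\cap[0,N])\subseteq[0,N/2)$. Combining these,
\[
  \frac{|T^m(A')\cap[0,N/2)|}{N/2}\ge \frac{2}{N}\cdot\frac{|A'\cap[0,N]|}{2^m}=2^{1-m}\,\frac{|A'\cap[0,N]|}{N},
\]
and taking the limit superior along a subsequence realizing $\overline{d}(A')$ yields $\overline{d}(T^m(A'))\ge 2^{1-m}\,\overline{d}(A')>0$. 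Since $T^m(A')\subseteq H_f(A)$, this gives $\overline{d}(H_f(A))>0$, as required.

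The main obstacle is precisely controlling how badly $T^m$ compresses $A'$, since a priori the image could be far thinner than the source. The resolution is the observation that $T^m$ is boundedly many-to-one (at most $2^m$-to-$1$) for a \emph{fixed} $m$, which only costs a constant factor $2^{-m}$; this is harmless because we need positivity rather than a sharp density. The one point to handle carefully is the interaction of the two parameters: $d$ must be chosen $>1/2$ yet small enough for Lemma \ref{prelim:cond}, and $m$ must then be taken large enough (depending on $\alpha$) that the Lemma \ref{prelim:clt} bound beats $\alpha/2$. Fixing $M\ge 10$ at the outset makes both constraints compatible.
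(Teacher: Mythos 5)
Your proposal is correct and follows essentially the same strategy as the paper's proof: choose $d>1/2$ compatible with Lemma \ref{prelim:cond}, use Lemma \ref{prelim:clt} to make the exceptional set $\{y: S_m(y)\ge md\}$ of density less than a fixed fraction of $\overline{d}(A)$, restrict to the good part of $A$ above $m2^m$, and apply the bound $|T^m(B)|\ge |B|/2^m$ together with $T^m(y)<y/2$ to get $\overline{d}(H_f(A))\gtrsim \overline{d}(A)/2^m>0$. Your write-up is a cleaner packaging (working with upper densities of the uniformly distributed exceptional set rather than the paper's explicit interval counts along the sequences $a_i,b_i$), and you supply the justification that $T$ is at most $2$-to-$1$, which the paper asserts without proof, but the underlying argument is the same.
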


\begin{proof}

  Let $A\subset \N$ be a set with $\overline{d}(A) > 0$. 
  From lemma \ref{prelim:cond} there exists $d > 1/2$ such that for $m$ sufficiently large all $y \geq m2^m$ we have $T^m(y) < y/2 $ when $S_m(y) < md$. 
  Select $m\in \N$ such that $U(m,d)/2^m > 1-\overline{d}(A)/4$ and it is large enough for the above implication. 
  We can do this, as $U(m,d)/2^m \to 1$ as $m\to\infty$ for all $d > 1/2$ due to lemma \ref{prelim:clt}. 
  Define the set
  $$ L_{m,d} = \{y\in \N: S_m(y) < md\}. $$

  By definition of $\overline{d}$ there exists a sequence $(a_i)_{i=1}^\infty$ where 
  $$ \frac{| A\cap [0,a_i] |}{a_i} \to \overline{d}(A). $$
  For any $a_i > m2^m$ let $b_i$ be the greatest integer for which $b_i2^m \leq a_i$. 
  Then the set $A\cap [m2^m, b_i2^m]$ satisfies 
  \begin{align*}
    |A\cap [m2^m, b_i2^m]| &= |A\cap [0, a_i]| - |A\cap [0,m2^m)| - |A\cap (b_i2^m, a_i]| \\
    &\geq |A\cap [0, a_i]| - |\N\cap [0,m2^m)| - |\N\cap [b_i2^m, a_i]| \\
    &\geq |A\cap [0, a_i]| - m2^m - 2^m. \\
  \end{align*}
  Dividing by $a_i$ and taking the limsup we see
  $$ \limsup_{i\to\infty} \frac{|A\cap [m2^m, b_i2^m]|}{a_i} \geq \overline{d}(A).$$
  Reindexing the sequence $(b_i)_{i=1}^\infty$, we can assume
  $$ \frac{|A\cap [m2^m, b_i2^m]|}{a_i} \geq \frac{3\overline{d}(A)}{4}$$
  for all $i\in \N$. 
  Additionally, note that due to our choice of $m$ and lemma \ref{prelim:clt} we have that
  $$ L_{m,d}^c\cap [m2^m, b_i2^m] = (b_i-m)\cdot\left(2^m - U(m,d)\right) \leq \frac{\overline{d}(A)}{4}(b_i-m)2^m.$$

  The set $A\cap [m2^m, b_i2^m] \cap L_{m,d}$ is a subset of $A$ for which $T^m(y) < y/2$ for of all it elements (due to the choice of $d$ and lemma \ref{prelim:cond}.) 
  Using the above bounds, the size of this set is at least
  \begin{align*}
    |A\cap [m2^m, b_i2^m] \cap L_{m,d}| &\geq |A\cap [m2^m,b_i2^m]| - |L_{m,d}^c\cap [m2^m,b_i2^m]|\\
    &\geq \frac{3\overline{d}(A)}{4}\cdot a_i - \frac{\overline{d}(A)}{4}(b_i-m)2^m \geq \frac{\overline{d}(A)}{2}\cdot a_i.
  \end{align*}
  
  Using this bound and that $|T^m(B)| \geq |B|/2^m$ for any $B\subset \N$ we have the following bound on the size of $H_f$ of this set:
  \begin{align*}
    |H_f(A\cap [m2^m, b_i2^m])| &\geq |\{T^m(y): y\in A\cap [m2^m, b_i2^m] \text{ and }S_m(y) < md\}| \\
    &= |T^m(A\cap [m2^m, b_i2^m] \cap L_{m,d})| \\
    &\geq \frac{1}{2^m} \left| A\cap [m2^m, b_i2^m] \cap L_{m,d}\right| \geq \frac{a_i\overline{d}(A)}{2^{m+1}} 
  \end{align*}
  Dividing this inequality by $a_i$ and noting that $H_f(A) \cap [0,a_i] \supset H_f(A\cap [0,a_i])$ gives the density estimate
  \begin{align*}
    \frac{|H_f(A)\cap [0,a_i]|}{a_i} &\geq \frac{|H_f(A\cap [m2^m, b_i2^m])|}{a_i} \\
    &\geq \frac{a_i\overline{d}(A)}{2^{m+1}a_i} = \frac{\overline{d}(A)}{2^{m+1}} \\
  \end{align*}
  This shows that $\overline{d}(H_f(A)) \geq \overline{d}(A)/2^{m+1} > 0$ and hence that $H_f$ is p\textoverline{d}p. 
\end{proof}

\begin{cor}
  $H_f$ is p\textoverline{d}p for all $f:\N\to\R$ where $f(y) = c_1y$, $c_1 > 0$.
  \label{sep:linear-sep}
\end{cor}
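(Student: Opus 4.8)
The plan is to reduce every positive linear multiple to the single case $f(y)=y/2$ established in the preceding theorem. The two available mechanisms are monotone comparison (Lemma \ref{sep:comparison}, which upgrades p\textoverline{d}p-ness from a smaller function to a larger one) and self-composition of the half-map (Corollary \ref{sep:recurse}, which preserves p\textoverline{d}p-ness under iteration). Combining them lets me dominate $f(y)=c_1y$ from below by an iterate of $y/2$, and then transfer the property upward.

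Concretely, I would write $g(y)=y/2$, so that $g^k(y)=y/2^k$ for every $k\in\N$ and $g$ is increasing (which is what Corollary \ref{sep:recurse} requires). Given $c_1>0$, the first step is to choose $k\in\N$ large enough that $2^{-k}\le c_1$; this is always possible since $2^k\to\infty$. The preceding theorem gives that $H_g$ is p\textoverline{d}p, and Corollary \ref{sep:recurse} then yields that $H_{g^k}$ is p\textoverline{d}p. Finally, since $g^k(y)=y/2^k\le c_1y=f(y)$ holds for all $y\in\N$ by the choice of $k$, Lemma \ref{sep:comparison} promotes this to the conclusion that $H_f$ is p\textoverline{d}p.

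Note that when $c_1\ge 1/2$ one may simply take $k=1$, so the argument specializes to a direct comparison of $f$ with $y/2$; the iteration via Corollary \ref{sep:recurse} is only genuinely needed to reach the small values $c_1<1/2$, where $y/2$ itself no longer lies below $c_1y$. Consequently I do not expect a real obstacle here: the whole corollary is an elementary combination of the theorem with the two structural lemmas, and the only quantity to pin down is the number of compositions $k$, fixed by the inequality $2^{-k}\le c_1$.
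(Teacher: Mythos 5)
Your argument is correct and is essentially identical to the paper's own proof: both dominate $f(y)=c_1y$ from below by an iterate $g^k(y)=y/2^k$ of the half-map, obtain that $H_{g^k}$ is p\textoverline{d}p from the theorem together with the iteration result, and then conclude via Lemma \ref{sep:comparison}. If anything, your citation of Corollary \ref{sep:recurse} for the iteration step is more precise than the paper's, which points to Lemma \ref{sep:composition} even though that lemma only provides the inclusion and not the preservation of positive upper density.
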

\begin{proof}
  From theorem $1$ we know that $H_{g}$ is p\textoverline{d}p, where $g(y) = y/2$. 
  For any $c_1$ there exists $n$ such that $1/2^n < c_1$. 
  Then, by lemma \ref{sep:composition} we have $H_{g^n}$ is p\textoverline{d}p. 
  Since $g^n(y) = y/2^n < c_1y = f(y)$ we have $H_{g^n}(A)\subset H_f(A)$ for all $A\subset \N$ and hence by lemma \ref{sep:comparison} we have $H_f$ is p\textoverline{d}p.
\end{proof}

\begin{cor}
  The set $M_c = \{y: \exists k \text{ such that } T^k(y) < cy \}$ has density $1$ for all $c > 0$. 
\end{cor}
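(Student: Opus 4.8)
The plan is to observe that this corollary is an immediate consequence of the results already assembled, so no genuinely new estimate is required. First I would identify $M_c$ with the set $M_f$ appearing in lemma \ref{sep:density}, taking $f\colon\N\to\R$ to be the linear function $f(y) = cy$. Since $c > 0$ this $f$ is increasing, so it is a legitimate input to the constructions of the previous section; in particular $H_f$ is well defined, and the defining condition ``$\exists k$ such that $T^k(y) < cy$'' of $M_c$ is literally the defining condition ``$\exists k\in\N$ where $T^k(y) < f(y)$'' of $M_f$.

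Next I would invoke corollary \ref{sep:linear-sep}, which asserts precisely that $H_f$ is p\textoverline{d}p whenever $f(y) = c_1 y$ for some $c_1 > 0$. Applying it with $c_1 = c$ yields that $H_f$ is p\textoverline{d}p. Finally I would feed this into lemma \ref{sep:density}: since $H_f$ is p\textoverline{d}p, the set $M_f$ has density $1$. Because $M_f = M_c$ by our choice of $f$, this establishes the claim.

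There is no genuinely hard step here; all of the substance lives upstream in corollary \ref{sep:linear-sep} (which rests on theorem $1$ and the Terras counting estimate of lemma \ref{prelim:clt}) and in lemma \ref{sep:density}. The only point requiring any care is the purely bookkeeping one of matching notation: verifying that $M_c$ and $M_f$ coincide verbatim, and confirming that $f(y) = cy$ is increasing for $c > 0$ so that the earlier lemmas apply without modification.
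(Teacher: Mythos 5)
Your proposal is correct and follows exactly the paper's own route: the paper's proof is the one-line observation that the claim follows from corollary \ref{sep:linear-sep} and lemma \ref{sep:density}, which is precisely the chain you spell out. Your additional bookkeeping (checking that $M_c = M_f$ for $f(y)=cy$ and that $f$ is increasing) is a harmless elaboration of the same argument.
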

\begin{proof}
  This follows directly from lemma \ref{sep:density} and corollary \ref{sep:linear-sep}. 
\end{proof}

\begin{rmk}
  Suppose that $H_f$ is p\textoverline{d}p where $f:\N\to \R$ is the map $f(y) = y^c$ for a single $c < 1$.
  Applying lemma \ref{sep:composition} (as we did in corollary \ref{sep:linear-sep}) will give us $H_g$ is p\textoverline{d}p for $g(y) = y^{c'}$ for all $c' > 0$. 
  This directly gives a way to improve the result in \cite{korec-94}. 
  Proving that such a $H_f$ is p\textoverline{d}p would require a bound better than $|T^m(B)| \geq |B|/2^m$ (as was used in the proof of theorem 1.)

\end{rmk}

\section{Acknowledgements}
I would like to thank Professor Idris Assani for bringing these problems to my attention and for his assistance in creating this paper. 

\printbibliography
\end{document}